\documentclass[12pt]{article}
\usepackage{amsmath,amsthm,amsfonts,latexsym,amssymb,mathrsfs,graphics,pst-all}

 \usepackage{footnote}
\theoremstyle{plain}
\newtheorem{theorem}{Theorem}[section]
\newtheorem{lemma}[theorem]{Lemma}
\newtheorem{corollary}[theorem]{Corollary}
\newtheorem{proposition}[theorem]{Proposition}

\theoremstyle{definition}
\newtheorem{definition}[theorem]{Definition}
\newtheorem{example}[theorem]{Example}

\newtheorem{remark}[theorem]{Remark}

\numberwithin{equation}{section}

 \newcommand{\bnum}{\begin{enumerate}}
 \newcommand{\enum}{\end{enumerate}}

\begin{document}

\begin{center}{\bf \large{Symmetricity of rings relative to the prime radical}}\\
\end{center}

\begin{center}
 {\textbf{Debraj Roy}\\ Department of Mathematics, National Institute of Technology Meghalaya, Shillong-793003, India.\\  Email: debraj.hcu@gmail.com\\ 
\textbf{Tikaram Subedi}\\ Department of Mathematics, National Institute of Technology Meghalaya, Shillong-793003, India. \\  Email: tsubedi2010@gmail.com}
\end{center}

\begin{small}
\textbf{Abstract}:\textit{In this paper, we introduce and study a strict generalization of symmetric rings. We call a  ring $R$ \textit{`$P$-symmetric' } if for any $a,\, b,\, c\in R,\, abc=0$ implies $bac\in P(R)$, where $P(R)$ is the prime radical of $R$. It is shown that the class of $P$-symmetric rings lies between the class of central symmetric rings and generalized weakly symmetric rings. Relations are provided between $P$-symmetric rings and some other known classes of rings. From an arbitrary $P$-symmetric ring, we produce many families of $P$-symmetric rings. }

\textbf{Keywords}:  Strongly nilpotent elements, prime radical, $P$-symmetric rings.\\
\textbf{Mathematics Subject Classification (2010)}:   16U80, 16N99, 16S70\\

\end{small}

\section{Introduction}\label{Sec1}

Throughout this paper, $R$ denotes an associative ring with identity and all modules are unitary. 
The symbols  $E$($R$), $J(R)$, $N$($R$), $P(R)$, $Z(R)$ respectively stand for the set of all idempotent elements, the Jacobson radical, the set of all nilpotent elements, the prime radical and the center of $R$. $R$ is \textit{reduced} if $N(R)=0$. $R$ is \textit{left (right) quasi-duo} if every maximal left (right) ideal of $R$ is an ideal. An ideal $P$ of $R$ is \textit{prime} if for any ideals $A,\, B$ of $R$ with $AB\subseteq P$, either $A\subseteq P$ or $B\subseteq P$. An element $a\in R$ is \textit{strongly nilpotent} if we consider any sequence $\lbrace p_{n}\rbrace $ where $ p_0=a$ and $p_{i+1}\in p_{i}Rp_{i}$ for all $i\geq 0$, then there exists a positive integer $k$ such that $p_{k}=0$. It is well known that 
$P(R)$ consists of all strongly nilpotent elements of $R$. We also know that $P(R)=\lbrace a\in R\mid RaR$ is nilpotent$\rbrace$. $R$ is \textit{2-primal} if $N(R)=P(R)$.

 $R$ is \textit{symmetric} if for any $a,\, b,\, c\in R,\, abc=0$ implies $bac=0$. Lambek in \cite{lam} introduced symmetric rings and obtained some of the significant results in this direction.  Further contribution to symmetric rings and their generalizations have been made by various authors over the last several years (see, \cite{kafkas}, \cite{lam}, \cite{ouyang}, \cite{wei1}). Recently, semicommutativity of rings related to the prime radical was studied in \cite{kose1}. This motivated us to introduce rings called  $P$-symmetric  rings wherein a ring $R$ is called $P$-symmetric if for any  $a,\, b,\, c\in R,\, abc=0$ implies $bac\in P(R)$. This paper studies $P$-symmetric rings in consultation and continuation with various existing generalizations of symmetric rings.

\section{$P$-symmetric rings}

\begin{definition}
We call a ring $R$ \textit{`$P$-symmetric'} if for any $a,\, b,\, c\in R,\, abc=0$ implies $bac\in P(R)$.
\end{definition}

It follows that symmetric rings are $P$-symmetric. Not every $P$-symmetric ring is symmetric  as shown by the following example:

\begin{example}\label{expr}
Let $R=S_4(\mathbb{R})=\left \lbrace
\left(\begin{array}{lccccr}
a & b_{1} & b_{2} & b_{3}\\
0 & a & b_{4} & b_{5}\\
 0 & 0 & a & b_{6}\\
 0 & 0 & 0 &  a
\end{array}
\right ):a,\, b_{i}\in {\mathbb{R}} \right \rbrace$. It is easy to see that every element of $R$ is either a unit or an element of the prime radical and so $R$ is $P$-symmetric.

Now $A=\left(\begin{array}{lccccr}
	0 & 0 & 0 & 0\\
	0 & 0 & 1 & 0\\
           0 & 0 & 0 & 0\\
           0 & 0 & 0 & 0
	\end{array}
	\right )$, $B=\left(\begin{array}{lccccr}
	0 & 1 & 0 & 0\\
	0 & 0 & 0 & 0\\
           0 & 0 & 0 & 0\\
           0 & 0 & 0 & 0
	\end{array}
	\right )\in R$ and $AB=0$ but $BA\neq 0$ which proves that $R$ is not symmetric.

\end{example}

$R$ is \textit{$P$-semicommutative} (\cite{kose1}) if for any $a,\, b\in R,\, ab=0$ implies $aRb\subseteq P(R)$.

\begin{theorem}\label{prchar}
 Let $R$ be a $P$-symmetric ring. Then $N_2(R)=\lbrace a\in R \mid a^2=0\rbrace \subseteq P(R)$. In particular, $R$ is $P$-semicommutative.
\end{theorem}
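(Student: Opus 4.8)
The plan is to reduce both assertions to a single principle about the prime radical, namely that $P(R)$ is a semiprime ideal, so that for any $t\in R$ the inclusion $tRt\subseteq P(R)$ already forces $t\in P(R)$. Using only the characterisations of $P(R)$ recalled in the introduction, I would establish this principle directly: given $tRt\subseteq P(R)$, take any sequence $p_0=t$ with $p_{i+1}\in p_iRp_i$; then $p_1\in p_0Rp_0=tRt\subseteq P(R)$, so $p_1$ is strongly nilpotent, and since $p_1,p_2,\dots$ is itself an admissible sequence testing the strong nilpotency of $p_1$, it must reach $0$. Hence every sequence starting at $t$ terminates, i.e.\ $t\in P(R)$. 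This converts each claim into the task of exhibiting products of the form $tRt$ lying in $P(R)$.

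For the first assertion, let $a\in N_2(R)$, so $a^2=0$. For every $r\in R$ we have $(ra)\,a\,1=ra^2=0$, so the $P$-symmetric hypothesis, swapping the first two factors, gives $a\,(ra)\,1=ara\in P(R)$. Thus $aRa\subseteq P(R)$, and the principle above yields $a\in P(R)$; this proves $N_2(R)\subseteq P(R)$.

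For the ``in particular'' clause, suppose $ab=0$; I must show $aRb\subseteq P(R)$. First observe that for each $s\in R$ one has $(bsa)^2=bs(ab)sa=0$, so $bsa\in N_2(R)\subseteq P(R)$ by the first part; hence $bRa\subseteq P(R)$. Now fix $r\in R$ and set $t=arb$. For any $s\in R$, $t\,s\,t=ar(bsa)rb$, and since $bsa\in bRa\subseteq P(R)$ and $P(R)$ is a two-sided ideal, $tst\in P(R)$. Therefore $tRt\subseteq P(R)$, and the principle gives $t=arb\in P(R)$, so $aRb\subseteq P(R)$ and $R$ is $P$-semicommutative.

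The main obstacle, and the reason the argument must be organised this way, is that the $P$-symmetric condition only pushes products into $P(R)$ rather than to $0$, so one cannot iterate it to strengthen ``lies in $P(R)$'' conclusions. The resolution is to avoid chasing equalities and instead feed the relevant products into the strong-nilpotency (semiprimeness) machinery of $P(R)$. The only genuinely delicate steps are choosing the right triple $(ra,a,1)$ for the $P$-symmetric condition in the first part, and spotting the identities $(bsa)^2=bs(ab)sa$ and $(arb)s(arb)=ar(bsa)rb$, which place a factor inside the ideal $P(R)$.
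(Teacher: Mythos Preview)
Your proof is correct. For the inclusion $N_2(R)\subseteq P(R)$ your argument coincides with the paper's: the paper applies the $P$-symmetric hypothesis to the triple $(r,a,a)$ rather than your $(ra,a,1)$, but both produce $ara\in P(R)$ and hence $aRa\subseteq P(R)$, from which $a\in P(R)$ follows by semiprimeness of $P(R)$ (the paper leaves this last step implicit; you spell it out via strong nilpotency, which is a nice touch). The only genuine difference is in the ``in particular'' clause: the paper simply cites \cite[Theorem~2.4]{kose1} to pass from $N_2(R)\subseteq P(R)$ to $P$-semicommutativity, whereas you give a self-contained direct argument using $(bsa)^2=0$ and then $tRt\subseteq P(R)$ for $t=arb$. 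Your route buys independence from the external reference and makes the paper self-contained at this point; the paper's route is shorter on the page but defers the content to \cite{kose1}.
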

\begin{proof}
  Let $R$ be a $P$-symmetric ring and $a\in N_{2}(R),\, r\in R$. Then $raa=0$. As $R$ is $P$-symmetric, we obtain $ara\in P(R)$. Therefore $aRa\subseteq P(R)$ which leads to $a\in P(R)$. By (\cite{kose1}, Theorem 2.4), $R$ is $P$-semicommutative.
\end{proof}

\begin{theorem}\label{2pr}
The following conditions are equivalent for any ring $R$:
\begin{enumerate}
\item $R$ is $2$-primal.
\item For any $a,\, b\in R,\, ab\in P(R)$ implies $ba\in P(R)$.
\item $R/P(R)$ is reduced.
\end{enumerate}
\end{theorem}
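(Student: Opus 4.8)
The plan is to prove the cycle of implications $(1)\Rightarrow(2)\Rightarrow(3)\Rightarrow(1)$. Throughout I will lean on three standing facts about the prime radical: $P(R)$ is a two-sided ideal, $P(R)\subseteq N(R)$ (strongly nilpotent elements are in particular nilpotent), and $P(R)$ is a semiprime ideal, so that $aRa\subseteq P(R)$ forces $a\in P(R)$; equivalently $R/P(R)$ is a semiprime ring. These let me shuttle between statements about $R$ and statements about the quotient $R/P(R)$.

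For $(1)\Rightarrow(2)$, suppose $R$ is $2$-primal and $ab\in P(R)$. Since $P(R)$ is an ideal, $(ba)^2=b(ab)a\in P(R)\subseteq N(R)$, so $(ba)^2$ is nilpotent and hence so is $ba$; then $ba\in N(R)=P(R)$, giving (2). For $(3)\Rightarrow(1)$ I only need the inclusion $N(R)\subseteq P(R)$, as $P(R)\subseteq N(R)$ always holds. If $a^n=0$, then the image $\bar a$ of $a$ in the reduced ring $R/P(R)$ satisfies $\bar a^n=0$, forcing $\bar a=0$, i.e. $a\in P(R)$; thus $N(R)=P(R)$ and $R$ is $2$-primal. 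Both of these are short computations using only that $P(R)$ is an ideal contained in $N(R)$.

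The substance lies in $(2)\Rightarrow(3)$, and the key step is to upgrade the commuting hypothesis (2) into a two-sided statement. Assuming (2), take $a$ with $a^2\in P(R)$. For every $r\in R$ we have $a(ar)=a^2r\in P(R)$, so (2) applied to the pair $a,\,ar$ yields $(ar)a=ara\in P(R)$; hence $aRa\subseteq P(R)$, and semiprimeness of $P(R)$ gives $a\in P(R)$. This shows that any element whose square lies in $P(R)$ already lies in $P(R)$, i.e. $R/P(R)$ has no nonzero square-zero element. To conclude that $R/P(R)$ is reduced I then pass from square-zero to arbitrary nilpotents by an index-halving argument: if $\bar a^{\,n}=0$ with $n\ge 2$ minimal, then $\bar a^{\,\lceil n/2\rceil}$ is square-zero, hence $0$, contradicting minimality, so the only nilpotent is $0$.

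I expect $(2)\Rightarrow(3)$ to be the main obstacle, concentrated in two moves: converting (2) into the two-sided condition $aRa\subseteq P(R)$ so that semiprimeness of $P(R)$ can be invoked, and then reducing an arbitrary nilpotency index to the square case via the halving trick. The remaining implications, $(1)\Rightarrow(2)$ and $(3)\Rightarrow(1)$, are routine once the standing facts about $P(R)$ are in place.
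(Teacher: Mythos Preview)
Your proof is correct and follows essentially the same route as the paper: the same computation for $(1)\Rightarrow(2)$, the same reduction $a^2\in P(R)\Rightarrow aRa\subseteq P(R)\Rightarrow a\in P(R)$ for $(2)\Rightarrow(3)$ (the paper pairs $(ra,a)$ where you pair $(a,ar)$, an inconsequential variation), and the same triviality for $(3)\Rightarrow(1)$. The only difference is that you spell out the index-halving step from ``no square-zero elements'' to ``reduced,'' which the paper leaves implicit.
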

\begin{proof}
$(1)\Longrightarrow (2)$.
Let $a,\, b\in R$ with $ab\in P(R)$. Then $(ba)^2=b(ab)a\in P(R)=N(R)$ which implies that $ba\in N(R)=P(R)$.

$(2)\Longrightarrow (3)$.
Let $a\in R$ with $a^2\in P(R)$. Then  for any $r\in R,\,  raa\in P(R)$ and hence by hypothesis, $ara\in P(R)$. Therefore $a\in P(R)$.

$(3)\Longrightarrow (1)$ is trivial.
\end{proof}

\begin{theorem}
The following conditions are equivalent for a $2$-primal ring $R$:
\begin{enumerate}
\item $R$ is $P$-symmetric.
\item For any $a,\, b,\, c\in R,\, abc=0$ implies $acb\in P(R)$.
\item For any $a,\, b,\, c\in R,\, abc=0$ implies $cba\in P(R)$.
\end{enumerate}
\end{theorem}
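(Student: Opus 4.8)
The plan is to establish the cycle of implications $(1)\Rightarrow(2)\Rightarrow(3)\Rightarrow(1)$, with the entire argument resting on the characterization of $2$-primality recorded in Theorem \ref{2pr}. Concretely, since $R$ is assumed $2$-primal, Theorem \ref{2pr} guarantees that $P(R)$ is invariant under interchanging the factors of a product: for all $x,y\in R$, $xy\in P(R)$ forces $yx\in P(R)$. This single ``switching'' property is precisely what allows me to rotate a three-fold product cyclically without disturbing its membership in $P(R)$, and it is the engine of the whole proof. I would isolate it at the outset and then run each implication as a one-line computation.

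For $(1)\Rightarrow(2)$ I would start from $abc=0$, apply $(1)$ to obtain $bac\in P(R)$, then write $bac=b(ac)$ and invoke the switching property with the pair $b$ and $ac$ to get $(ac)b=acb\in P(R)$, which is exactly $(2)$. The implications $(2)\Rightarrow(3)$ and $(3)\Rightarrow(1)$ are structurally identical: from $abc=0$ the current hypothesis places one of $bac,\,acb,\,cba$ in $P(R)$, and a single application of the switching property, factoring off the leading letter each time ($acb=a(cb)\rightsquigarrow cba$, and $cba=c(ba)\rightsquigarrow bac$), rotates it to the next member of the list. Here one uses that $bac,\,acb,\,cba$ are cyclic rearrangements of one another, so that successive rotations cycle through all three.

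The only genuine point of substance, and the step where a reader is most likely to stumble, is that these implications cannot be chained naively: all three statements share the \emph{same} hypothesis $abc=0$, so $(2)$ is not deduced by feeding the conclusion of $(1)$ back in as a fresh zero-product. The $2$-primal hypothesis, through Theorem \ref{2pr}, is exactly what bridges this gap, transporting the conclusion of one statement to the conclusion of the next. I would therefore flag explicitly that $2$-primality is used essentially at each switch: without it there is no reason the interchanged product should remain in $P(R)$, and the equivalence would fail.

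As a sanity check and an alternative route, I would observe that $2$-primality makes $R/P(R)$ reduced (condition $(3)$ of Theorem \ref{2pr}), hence symmetric, so that $abc=0$ already forces each of $bac,\,acb,\,cba$ into $P(R)$; this recovers all three conditions simultaneously and is fully consistent with their asserted equivalence. Nonetheless I would keep the cyclic-implication argument as the main proof, since it is self-contained, transparent, and invokes only Theorem \ref{2pr}.
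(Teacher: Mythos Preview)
Your argument is correct and follows the same overall scheme as the paper: establish the cycle $(1)\Rightarrow(2)\Rightarrow(3)\Rightarrow(1)$, each step powered by Theorem~\ref{2pr}. The only difference is which equivalent form of $2$-primality is invoked. You use condition~(2) of Theorem~\ref{2pr} directly: from $bac\in P(R)$ you factor $bac=b(ac)$ and switch to $(ac)b\in P(R)$. The paper instead uses condition~(3): from $bac\in P(R)$ it observes that $(acb)^2=ac(bac)b\in P(R)$ (as $P(R)$ is an ideal) and then appeals to reducedness of $R/P(R)$ to conclude $acb\in P(R)$. Your route is a bit more direct and avoids the auxiliary computation of a square; the paper's route makes the role of reducedness more visible. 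Both are short and entirely equivalent in strength.
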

\begin{proof}
$(1)\Longrightarrow (2)$.
Let $a,\, b,\, c\in R$ with $abc=0$. By hypothesis, $(acb)^2=ac(bac)b\in P(R)$. Then by Theorem \ref{2pr}, $acb\in P(R)$.

That $(2)\Longrightarrow (3)$ and $(3)\Longrightarrow (1)$ can be proved similarly.
\end{proof}

\begin{theorem}
Let $R$ be a left quasi-duo ring such that every prime ideal of $R$ is maximal. Then $R$ is $P$-symmetric.
\end{theorem}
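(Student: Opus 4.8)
The plan is to exploit the characterization $P(R)=\bigcap\{P : P \text{ is a prime ideal of } R\}$ and reduce the whole statement to a computation inside each quotient $R/P$. Fix $a,b,c\in R$ with $abc=0$. To prove $bac\in P(R)$ it suffices to show $bac\in P$ for every prime ideal $P$ of $R$. Since by hypothesis every prime ideal of $R$ is maximal, each $R/P$ is a simple ring (it has no proper nonzero two-sided ideals) with identity.

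The structural heart of the argument is that the left quasi-duo property passes to quotients. If $\overline{M}$ is a maximal left ideal of $R/P$, then $\overline{M}=M/P$ for a maximal left ideal $M$ of $R$ containing $P$; because $R$ is left quasi-duo, $M$ is a two-sided ideal of $R$, and therefore $M/P$ is a two-sided ideal of $R/P$. Hence each $R/P$ is simultaneously simple and left quasi-duo.

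I would then show that a simple left quasi-duo ring must be a division ring. Since $R/P\neq 0$ has an identity, it possesses at least one maximal left ideal (Zorn's lemma); being left quasi-duo, that maximal left ideal is a proper two-sided ideal. But simplicity forces the only proper two-sided ideal to be $0$, so $0$ is itself a maximal left ideal of $R/P$. Consequently $R/P$ has no left ideals other than $0$ and $R/P$, which is exactly the statement that $R/P$ is a division ring.

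Finally, I pass the relation $abc=0$ to the division ring $R/P$: writing $\bar x$ for the image of $x$, we get $\bar a\,\bar b\,\bar c=0$, so one of $\bar a,\bar b,\bar c$ is zero, whence $\bar b\,\bar a\,\bar c=0$, i.e.\ $bac\in P$. As $P$ runs over all prime (hence maximal) ideals, this gives $bac\in\bigcap_P P=P(R)$, so $R$ is $P$-symmetric. The only nonroutine step I anticipate is the implication \emph{simple} $+$ \emph{left quasi-duo} $\Rightarrow$ \emph{division ring}; once that is established, together with the (standard) fact that left quasi-duo rings have left quasi-duo quotients, the remainder is immediate.
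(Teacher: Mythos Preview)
Your proof is correct. The step you flagged as nonroutine (simple $+$ left quasi-duo $\Rightarrow$ division ring) is exactly as you sketched it, and the passage of the left quasi-duo property to quotients is the standard correspondence argument.

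The paper takes a related but differently organized route. Rather than working in the quotients $R/P$, it first observes that the hypothesis forces $J(R)=P(R)$ and then argues directly inside $R$ at the level of maximal \emph{left} ideals: given a maximal left ideal $M$ (automatically two-sided by quasi-duo), the paper shows by an explicit element computation that $a\notin M$ forces $bc\in M$, and then $b\notin M$ forces $c\in M$, so in every case $bac\in M$; intersecting over all $M$ gives $bac\in J(R)=P(R)$. Your approach trades this hands-on manipulation for a cleaner structural statement (each $R/P$ is a division ring), at the cost of invoking two auxiliary facts (quasi-duo descends to quotients; simple quasi-duo rings are division rings). The paper's version avoids naming those facts but is in effect re-deriving them inline. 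Both arguments ultimately rest on the same phenomenon---that modulo any such ideal one of $a,b,c$ must vanish---so the difference is one of packaging rather than of underlying idea.
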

\begin{proof}
We note that $J(R)=P(R)$ since every prime ideal of $R$ is maximal. Let $a,\, b,\, c\in R$ with $abc=0$ and $M$ be a maximal left ideal of $R$. If $a\notin M$, then $x+ya=1$ for some $x\in M,\, y\in R$ leading to $xbc=bc$. Since $R$ is left quasi-duo, this leads to $bc\in M$.
 If $b\notin M$, then $(1-qb)c\in M$ for some $q\in R$ which further leads to $c\in M$. It follows that $bac\in J(R)=P(R)$.
\end{proof}

$R$ is \textit{central symmetric} (\cite{kafkas}) if for any $a,\, b,\, c\in R,\, abc=0$ implies $bac\in Z(R)$. $R$ is \textit{generalized weakly symmetric} (\cite{wei1}) if for any $a,\, b,\, c\in R,\, abc=0$ implies $bac\in N(R)$. 

\begin{theorem}
Every central symmetric ring is $P$-symmetric.
\end{theorem}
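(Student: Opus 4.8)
The plan is to verify the defining inclusion $bac\in P(R)$ through the description $P(R)=\bigcap\{\mathfrak{p}:\mathfrak{p}\text{ a prime ideal of }R\}$, i.e. to show that $bac$ lies in every prime ideal. So I would fix $a,b,c\in R$ with $abc=0$, fix a prime ideal $\mathfrak{p}$, and pass to the prime ring $\bar{R}=R/\mathfrak{p}$, writing $\bar{x}$ for the image of $x$. Central symmetry gives $bac\in Z(R)$, and since the image of a central element is central, $\bar{w}:=\bar{b}\bar{a}\bar{c}\in Z(\bar{R})$; we also have $\bar{a}\bar{b}\bar{c}=0$. Everything then reduces to proving $\bar{w}=0$ in each such $\bar{R}$.

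Before computing I would collect two ingredients. First, applying central symmetry to the regrouping $abc=(ab)\,c\,1=0$ yields $cab\in Z(R)$, so that $\bar{v}:=\bar{c}\bar{a}\bar{b}\in Z(\bar{R})$ as well. Second, the key structural fact about prime rings: a nonzero central element $z$ of $\bar{R}$ is a non-zero-divisor, because $z\bar{x}=0$ gives $z\bar{R}\bar{x}=\bar{R}z\bar{x}=0$ and hence $\bar{x}=0$ by primeness; in particular a central $z$ with $z^2=0$ must be $0$.

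The computation then runs as follows. Since $\bar{v}\bar{c}=\bar{c}\bar{a}\bar{b}\bar{c}=\bar{c}(\bar{a}\bar{b}\bar{c})=0$ and $\bar{v}$ is central, primeness forces $\bar{c}=0$ (in which case $\bar{w}=\bar{b}\bar{a}\bar{c}=0$) or $\bar{c}\bar{a}\bar{b}=0$. In the remaining case I would suppose $\bar{w}\neq 0$, so that $\bar{w}$ is regular. Then $\bar{w}\,\bar{a}\bar{b}=\bar{b}\bar{a}(\bar{c}\bar{a}\bar{b})=0$ forces $\bar{a}\bar{b}=0$, and consequently $\bar{w}^2=\bar{w}\bar{b}\bar{a}\bar{c}=\bar{b}\bar{a}\bar{w}\bar{c}=\bar{b}\bar{a}\bar{b}\bar{a}\bar{c}^2=\bar{b}(\bar{a}\bar{b})\bar{a}\bar{c}^2=0$, whence $\bar{w}=0$ --- a contradiction. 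Thus $\bar{w}=0$ for every prime $\mathfrak{p}$, i.e. $bac\in P(R)$, and $R$ is $P$-symmetric.

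I expect the main obstacle to be conceptual rather than computational: central symmetry only deposits $bac$ in $Z(R)$, which a priori has nothing to do with the prime radical, and exploiting the centrality of $bac$ by itself is circular (it merely lets one slide $bac$ around). The real leverage is the passage to prime quotients together with the fact that nonzero central elements of a prime ring cannot be zero-divisors; the auxiliary membership $cab\in Z(R)$ is what breaks the symmetry and makes the cancellations above go through. (Alternatively, one could try to show first that every central symmetric ring is $2$-primal, so that $R/P(R)$ is reduced and therefore symmetric, which would give $\bar{b}\bar{a}\bar{c}=0$ directly; but establishing $2$-primality appears to require essentially the same prime-quotient analysis.)
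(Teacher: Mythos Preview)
Your argument is correct, and it takes a genuinely different route from the paper's. The paper first invokes the known implication ``central symmetric $\Rightarrow$ generalized weakly symmetric'' (\cite{wei1}, Proposition 2.3) to obtain $(bac)^{2^{m}}=0$ for some $m$, and then combines the centrality of $bac$ with the strongly-nilpotent description of $P(R)$: for any sequence $p_{0}=bac$, $p_{i+1}\in p_{i}Rp_{i}$ one has $p_{n}=(bac)^{2^{n}}r_{n}$, so $p_{m}=0$. Your approach instead works one prime ideal at a time and exploits the fact that a nonzero central element of a prime ring is a non--zero-divisor; the auxiliary membership $cab\in Z(R)$ (from the regrouping $(ab)\cdot c\cdot 1=0$) is what drives the cancellations. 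The trade-off: the paper's proof is shorter but leans on an external citation, whereas yours is entirely self-contained. As an aside, once one knows that $bac$ is a \emph{central} nilpotent, membership in $P(R)$ is immediate anyway, since the two-sided ideal $R(bac)R=(bac)R$ is then nilpotent; this is essentially what the paper's sequence computation encodes.
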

\begin{proof}
Let $R$ be central symmetric and $a,\, b,\, c\in R$ with $abc=0$. As every central symmetric ring is generalized weakly symmetric (\cite{wei1}, Proposition 2.3), there exists a positive integer $m$ such that $(bac)^{2^{m}}=0$. Consider any sequence $\lbrace p_{n}\rbrace $ where $ p_0=bac$ and $p_{i+1}\in p_{i}Rp_{i}$ for all $i\geq 0$. Since $bac\in Z(R),\, p_1=(bac)^{2}r_1$ for some $r_1\in R$. Similarly $p_2=(bac)^{4}r_2$ for some $r_2\in R$. Therefore it can be shown that for any positive integer $n,\, p_{n}=(bac)^{2^{n}}r_{n}$ for some $r_{n}\in R$. Hence it follows that $p_{m}=0$. Therefore $bac\in P(R)$.
\end{proof}

Not every $P$-symmetric ring is central symmetric as shown by the following example:

\begin{example}\label{ex}
Let $R=S_4(\mathbb{R})$. Then $R$ is $P$-symmetric.

Take $A=\left(\begin{array}{lccccr}
    0 & 0  & 0 & 0\\
	0 & 0 & 1 & 0\\
	0 & 0 & 0 & 0\\
	0 & 0 & 0 & 0
	\end{array}
	\right )$ and $B=\left(\begin{array}{lccccr}
	0 & 1  & 0 & 0\\
	0 & 0 & 0 & 0\\
	0 & 0 & 0 & 0\\
	0 & 0 & 0 & 0
	\end{array}
	\right )\in R$. Then $AB=0$ but $BA\notin Z(R)$ so that $R$ is not central symmetric.
	\end{example}

Observing that  $P(R)\subseteq N(R)$, we have the following theorem:

\begin{theorem}
Every $P$-symmetric ring is generalized weakly symmetric.
\end{theorem}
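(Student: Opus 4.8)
The plan is to reduce the statement to the single containment already flagged in the remark preceding it, namely $P(R)\subseteq N(R)$. Since generalized weak symmetry only weakens the conclusion of $P$-symmetry (replacing the target set $P(R)$ by the larger set $N(R)$), once that containment is in hand the implication is immediate and requires no further manipulation of the triple product.

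Concretely, I would first justify $P(R)\subseteq N(R)$. This is where the only genuine content lies, and it follows directly from the strongly-nilpotent characterization of $P(R)$ recalled in the introduction: if $a\in P(R)$, then every sequence $\lbrace p_n\rbrace$ with $p_0=a$ and $p_{i+1}\in p_i R p_i$ terminates at $0$. Choosing in particular the sequence $p_{i+1}=p_i\cdot 1\cdot p_i=p_i^{2}$ gives $p_n=a^{2^{n}}$, so $a^{2^{k}}=0$ for some $k$, whence $a\in N(R)$. (Alternatively one may simply invoke that $RaR$ is nilpotent, so $a$ itself is nilpotent.) I would state this as a one-line observation, exactly as the author has already announced.

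With the containment established, the proof finishes in one step: let $a,\,b,\,c\in R$ with $abc=0$. Since $R$ is $P$-symmetric, $bac\in P(R)$, and by the containment $P(R)\subseteq N(R)$ we get $bac\in N(R)$. This is precisely the defining condition for $R$ to be generalized weakly symmetric, so the conclusion follows.

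I do not anticipate any real obstacle here; the statement is essentially a formal consequence of the inclusion $P(R)\subseteq N(R)$ together with the definitions, and the ``hard part'' — to the extent there is one — is merely recalling why the prime radical sits inside the nilpotent elements. The proof will therefore be short, and I would keep it to the two observations above rather than elaborating further.
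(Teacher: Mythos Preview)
Your proposal is correct and matches the paper's approach exactly: the paper also deduces the result immediately from the containment $P(R)\subseteq N(R)$, without any further argument. If anything, you give more detail than the paper does by spelling out why $P(R)\subseteq N(R)$ via the strongly-nilpotent characterization.
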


$R$ is $\textit{weakly reversible}$ {\rm (\cite{gang})} if for any $a, b, r\in R, ab=0$ implies $Rbra$ is a nil left ideal.

\begin{proposition}
Every $P$-symmetric ring is weakly reversible.
\end{proposition}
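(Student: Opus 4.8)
The plan is to reduce the statement to the fact, already established in Theorem~\ref{prchar}, that $N_2(R)\subseteq P(R)$ for any $P$-symmetric ring $R$. So fix $a,b,r\in R$ with $ab=0$; the goal is to show that $Rbra$ is a nil left ideal. Since $P(R)$ is a (two-sided) ideal of $R$ consisting of nilpotent elements, it suffices to prove that $bra\in P(R)$: this at once gives $Rbra\subseteq P(R)\subseteq N(R)$, whence every element of the left ideal $Rbra$ is nilpotent.

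The key computation is the observation that $bra$ is square-zero. Using $ab=0$ I would compute $(bra)^2=br(ab)ra=br\cdot 0\cdot ra=0$, so that $bra\in N_2(R)$. First I would record this one-line identity, taking care that the middle factor which vanishes is exactly the given relation $ab=0$, i.e.\ that the regrouping $b\,r\,a\,b\,r\,a=(br)(ab)(ra)$ is performed correctly.

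With $bra\in N_2(R)$ in hand, I would invoke Theorem~\ref{prchar} to conclude $bra\in P(R)$. As noted above, because $P(R)$ is an ideal this yields $Rbra\subseteq P(R)$, and since $P(R)\subseteq N(R)$ the principal left ideal $Rbra$ is nil, which is precisely the definition of weak reversibility.

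The argument is short and I do not anticipate a serious obstacle; the only point requiring care is the bookkeeping of which product vanishes inside the square $(bra)^2$, together with the routine verification that $Rbra$ is genuinely a left ideal and that membership $bra\in P(R)$ propagates to all of $Rbra$ through the ideal property. It is worth noting that, once Theorem~\ref{prchar} is available, the full force of $P$-symmetry is not invoked again here---everything flows from the inclusion $N_2(R)\subseteq P(R)$.
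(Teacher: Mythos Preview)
Your argument is correct, and it is in fact cleaner than the paper's own proof. The paper does not pass through Theorem~\ref{prchar}; instead, for each $s\in R$ it observes that $(sbra)(bra)(sbra)=0$ (again because of the internal $ab$), applies $P$-symmetry directly to this triple to get $bra(sbra)^2\in P(R)$, and then deduces that $(sbra)^3=s\cdot bra(sbra)^2\in P(R)\subseteq N(R)$, whence $sbra$ is nilpotent. Your route replaces this per-$s$ computation by the single observation $(bra)^2=0$, then lets Theorem~\ref{prchar} and the ideal property of $P(R)$ do the rest. The upshot is the same, but your version isolates the relevant mechanism more transparently: once $N_2(R)\subseteq P(R)$ is known, weak reversibility is essentially a one-line corollary, whereas the paper's proof re-derives a special instance of that inclusion on the fly.
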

\begin{proof}
 Let $R$ be a $P$-symmetric ring and $a, b, r\in R$ with $ab=0$. For any $s \in R,\, (sbra)(bra)(sbra)=0$. By hypothesis, $bra(sbra)^2\in P(R)\subseteq N(R)$ which implies that $sbra\in N(R)$. Hence $Rbra$ is a nil left ideal.
\end{proof}

\begin{remark}
Since a homomorphic image of a central symmetric ring need not be generalized weakly symmetric (\cite{wei1}, Example 2.11), it follows that a homomorphic image of a $P$-symmetric ring need not be P-symmetric.	
\end{remark}

\begin{proposition}
Let $R$ be a ring and $e\in E(R)$. If $R$ is $P$-symmetric, then $eRe$ is $P$-symmetric.
\end{proposition}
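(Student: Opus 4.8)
The plan is to exploit the characterization of the prime radical as the set of strongly nilpotent elements, applying it at two levels: once in the ambient ring $R$ and once in the corner ring $eRe$. Recall that $eRe$ is an associative ring with identity $e$, so the notion of a strong-nilpotency sequence, and hence $P(eRe)$, makes perfect sense there, and the identity $eRe=\lbrace exe\mid x\in R\rbrace$ with $ey=ye=y$ for all $y\in eRe$ will be used repeatedly.

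First I would take arbitrary $a,\,b,\,c\in eRe$ with $abc=0$, the goal being $bac\in P(eRe)$. Since $eRe\subseteq R$, the relation $abc=0$ also holds in $R$, so the $P$-symmetry of $R$ immediately yields $bac\in P(R)$. Moreover, writing $a=eae,\, b=ebe,\, c=ece$, one checks that $bac=e(bac)e\in eRe$, and therefore $bac\in P(R)\cap eRe$.

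The crux is then the inclusion $P(R)\cap eRe\subseteq P(eRe)$, which I would establish directly from the strongly-nilpotent description of the prime radical. Let $x\in P(R)\cap eRe$ and let $\lbrace p_n\rbrace$ be any sequence in $eRe$ with $p_0=x$ and $p_{i+1}\in p_i(eRe)p_i$ for all $i\geq 0$. Because $eRe\subseteq R$, we have $p_i(eRe)p_i\subseteq p_iRp_i$, so $\lbrace p_n\rbrace$ is simultaneously an admissible strong-nilpotency sequence for $x$ in $R$. Since $x$ is strongly nilpotent in $R$, there is an integer $k$ with $p_k=0$; hence every such sequence in $eRe$ terminates, so $x$ is strongly nilpotent in $eRe$, that is, $x\in P(eRe)$. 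Applying this with $x=bac$ gives $bac\in P(eRe)$ and completes the argument.

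The only delicate point is this last descent of strong nilpotency from $R$ to $eRe$, and it hinges entirely on the elementary containment $p_i(eRe)p_i\subseteq p_iRp_i$, which lets any admissible sequence in the corner ring be reinterpreted as one in $R$. Beyond this observation no computation is required, so I expect the proof to be short once the two-level use of the strongly-nilpotent characterization is made explicit.
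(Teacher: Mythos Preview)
Your argument is correct. The paper's proof is a one-liner that simply invokes the known identity $P(eRe)=eP(R)e$ (citing McCoy) and reads off the conclusion, whereas you avoid that citation by proving directly the only inclusion actually needed, namely $P(R)\cap eRe\subseteq P(eRe)$, via the strongly-nilpotent description of the prime radical. Since $P(R)$ is an ideal one has $eP(R)e\subseteq P(R)\cap eRe$, so your inclusion is exactly the relevant half of the McCoy identity; your key observation $p_i(eRe)p_i\subseteq p_iRp_i$ makes this half transparent without appealing to an outside source. The trade-off is clear: the paper's route is shorter but black-boxes a nontrivial fact, while yours is self-contained and isolates precisely what is required for the proposition.
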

\begin{proof}
The result follows from the fact that for any ring $R,\, P(eRe)=eP(R)e$ for any $e\in E(R)$ (\cite{coy}).
\end{proof}

\begin{corollary}
For any ring $R$, $R/P(R)$ is $P$-symmetric implies $R$ is $P$-symmetric.
\end{corollary}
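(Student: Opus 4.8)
The plan is to reduce the statement to the single structural fact that the prime radical of a ring modulo its own prime radical vanishes, that is, $P(R/P(R)) = 0$. This is standard: the prime ideals of $R/P(R)$ are exactly the images $P_\alpha/P(R)$ of the prime ideals $P_\alpha$ of $R$, all of which contain $P(R)$ since $P(R)$ is their intersection; consequently the intersection of all prime ideals of $R/P(R)$ equals $\bigl(\bigcap_\alpha P_\alpha\bigr)/P(R) = P(R)/P(R) = 0$. I would state this fact first, with a one-line justification or a citation, because every other line of the argument rests on it.

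With that in hand, I would write $\overline{R} = R/P(R)$ and observe that, because $P(\overline{R}) = 0$, the $P$-symmetric condition on $\overline{R}$ collapses to the honest symmetric condition: for $\overline{x},\overline{y},\overline{z} \in \overline{R}$, the implication $\overline{x}\,\overline{y}\,\overline{z} = 0 \Rightarrow \overline{y}\,\overline{x}\,\overline{z} \in P(\overline{R}) = 0$ is exactly $\overline{y}\,\overline{x}\,\overline{z} = 0$. So the hypothesis hands me a genuinely symmetric quotient, which is the key leverage.

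The main argument is then a routine transfer across the quotient map. I would take arbitrary $a,b,c \in R$ with $abc = 0$ and push this to $\overline{R}$, obtaining $\overline{a}\,\overline{b}\,\overline{c} = \overline{abc} = 0$. Applying the (now symmetric) quotient gives $\overline{b}\,\overline{a}\,\overline{c} = 0$, i.e. $\overline{bac} = 0$, which means precisely $bac \in P(R)$. Since $a,b,c$ were arbitrary, $R$ is $P$-symmetric, completing the proof.

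There is no serious obstacle here: the only point requiring care is the identification $P(R/P(R)) = 0$, and once that is recorded the rest is a mechanical application of the definition through the canonical surjection. I would keep the proof short and make sure the reader sees that the strength of the hypothesis comes entirely from the vanishing of the prime radical in the quotient, which is what turns the a priori weaker $P$-symmetric condition on $\overline{R}$ into full symmetry there.
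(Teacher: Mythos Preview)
Your argument is correct. The paper states this corollary without proof, and your direct verification via the standard identity $P(R/P(R))=0$ is precisely the intended justification: once the prime radical of the quotient vanishes, the $P$-symmetric hypothesis on $R/P(R)$ becomes genuine symmetry there, and pulling $abc=0$ through the canonical surjection yields $bac\in P(R)$.
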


\begin{lemma}\label{prsub}
(\cite{kose1}, Lemma 3.2) Let $R$ be a ring and $I,\, J$ are ideals of $R$ with $I\cap J=0$. Then $P(R)=(\bigcap_{i\in I_{1}}P_{i})\bigcap (\bigcap_{i\in I_{2}}P_{i})$ 
and $P_{i}$ is a prime ideal of $R$ for every $i\in I_{1}\bigcup I_{2}$ where $I_{1}$ and $I_{2}$ are index sets for the prime ideals of $R$ containing $I$ and $J$, respectively.
\end{lemma}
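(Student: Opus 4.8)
The plan is to invoke the standard description of the prime radical as $P(R)=\bigcap\{Q : Q \text{ is a prime ideal of } R\}$ and then to argue that, under the hypothesis $I\cap J=0$, the primes containing $I$ together with those containing $J$ actually exhaust \emph{all} prime ideals of $R$. Once this is established, the asserted equality is just a rewriting of $P(R)$ as an intersection over the same collection of primes, regrouped according to whether each prime contains $I$ or $J$.

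The essential step I would carry out first is the product-containment observation. For any two ideals one always has $IJ\subseteq I\cap J$; combined with the hypothesis this gives $IJ\subseteq I\cap J=0$, so $IJ\subseteq Q$ for every prime ideal $Q$ of $R$. By the very definition of a prime ideal, $IJ\subseteq Q$ forces $I\subseteq Q$ or $J\subseteq Q$. Hence every prime ideal of $R$ is recorded in the index set $I_1$ (if it contains $I$) or in $I_2$ (if it contains $J$), possibly in both; in particular the family $\{P_i : i\in I_1\cup I_2\}$ coincides with the set of all prime ideals of $R$.

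It then remains only to assemble the conclusion. Since each $P_i$ with $i\in I_1\cup I_2$ is by construction a prime ideal, the ``$P_i$ is prime'' clause is immediate. Finally, because the indexed family equals the full set of primes, I would write $P(R)=\bigcap_{i\in I_1\cup I_2}P_i=\big(\bigcap_{i\in I_1}P_i\big)\cap\big(\bigcap_{i\in I_2}P_i\big)$, which is the desired identity. I do not anticipate a genuine obstacle: the only points requiring care are the correct direction of the inclusion $IJ\subseteq I\cap J$ (rather than the reverse) and the remark that no prime ideal can avoid both $I$ and $J$ once their intersection vanishes.
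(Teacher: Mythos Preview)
Your argument is correct. The paper does not supply its own proof of this lemma; it is simply quoted from \cite{kose1}, so there is nothing in the present paper to compare against, but your reasoning is exactly the standard one: from $IJ\subseteq I\cap J=0$ and the definition of a prime ideal one gets that every prime of $R$ contains $I$ or $J$, whence the family $\{P_i:i\in I_1\cup I_2\}$ is the full set of primes and the asserted intersection equals $P(R)$.
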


\begin{theorem}\label{subpr}
Finite subdirect product of  $P$-symmetric rings is $P$-symmetric.
\end{theorem}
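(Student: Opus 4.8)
The plan is to reduce the general finite case to the case of two factors by induction and then invoke Lemma~\ref{prsub}. Recall that $R$ being a subdirect product of rings $R_1,\dots,R_n$ means there are ideals $A_1,\dots,A_n$ of $R$ with $\bigcap_{i=1}^{n}A_i=0$ and $R/A_i\cong R_i$ for each $i$. For the inductive step I would set $J=\bigcap_{i=2}^{n}A_i$; then $R/J$ is a subdirect product of the $P$-symmetric rings $R/A_2,\dots,R/A_n$ (with defining ideals $A_i/J$ intersecting in $0$), so by the induction hypothesis $R/J$ is $P$-symmetric, while $R/A_1\cong R_1$ is $P$-symmetric by assumption and $A_1\cap J=0$. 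Thus it suffices to treat a subdirect product of two $P$-symmetric rings $R/I$ and $R/J$ with $I\cap J=0$.

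So suppose $I,J$ are ideals of $R$ with $I\cap J=0$ and both $R/I$ and $R/J$ are $P$-symmetric. Let $a,b,c\in R$ with $abc=0$; I want to show $bac\in P(R)$. Passing to $R/I$, we have $(a+I)(b+I)(c+I)=0$, so $P$-symmetry of $R/I$ gives $bac+I\in P(R/I)$. The key observation is that $P(R/I)$ is the image of $\bigcap_{i\in I_1}P_i$, the intersection of all prime ideals of $R$ that contain $I$, since the prime ideals of $R/I$ correspond bijectively to the prime ideals of $R$ containing $I$. Hence $bac\in\bigcap_{i\in I_1}P_i$. The same argument applied to $R/J$ yields $bac\in\bigcap_{i\in I_2}P_i$.

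Combining the two memberships, $bac\in\left(\bigcap_{i\in I_1}P_i\right)\cap\left(\bigcap_{i\in I_2}P_i\right)$, which by Lemma~\ref{prsub} is precisely $P(R)$. Therefore $bac\in P(R)$, and $R$ is $P$-symmetric.

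I expect the main obstacle to be the correct identification of $P(R/I)$ with the intersection of the prime ideals of $R$ that contain $I$, i.e.\ matching the index sets $I_1,I_2$ appearing in Lemma~\ref{prsub} with the prime radicals of the respective quotients; once this correspondence is pinned down, the application of the Lemma is immediate. A secondary point to verify carefully is that the inductive reduction genuinely preserves the hypotheses at each stage, namely that $A_1\cap J=0$ and that $R/J$ inherits $P$-symmetry, so that the two-factor argument is legitimately available.
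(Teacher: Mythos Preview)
Your argument is correct and follows essentially the same route as the paper: reduce to two factors, pass to the quotients $R/I$ and $R/J$, use $P$-symmetry there to place $bac$ in the intersection of the primes over $I$ (resp.\ over $J$), and then invoke Lemma~\ref{prsub} to conclude $bac\in P(R)$. The only difference is that you spell out the inductive reduction from $n$ factors to two, whereas the paper states the theorem for finite subdirect products but writes out only the two-factor case; your added induction step is routine and entirely in the spirit of the paper's proof.
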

\begin{proof}
 Let $R$ be the subdirect product of two $P$-symmetric rings $A$ and $B$. Then we have epimorphisms $f: R\rightarrow A$ and $g: R\rightarrow B$ with $Ker(f)\cap  Ker(g)=0$ and $A\cong R/Ker(f)$ and $B\cong R/Ker(g)$.
 We denote $I=Ker(f),\, J=Ker(g)$. Let $a,\, b,\, c\in R$ with $abc=0$. Then $\overline{a}\overline{b}\overline{c}=\overline{0}\in R/I$. 
Since $R/I\cong A$ is $P$-symmetric, $\overline{b}\overline{a}\overline{c}\in P(R/I)=(\bigcap_{i\in I_{1}}P_{i})/I$ where $I_{1}$ is the index set for the prime ideals of $R$ containing $I$. Therefore $bac\in \bigcap_{i\in I_{1}}P_{i}$. 
Similarly we can prove that $bac\in \bigcap_{i\in I_{2}}P_{i}$ where $I_{2}$ is the index set for the prime ideals of $R$ containing $J$. Hence by Lemma \ref{prsub}, $bac\in P(R)$ which proves that $R$ is $P$-symmetric.
\end{proof}

\begin{lemma}\label{l1}
(\cite{kose1}, Lemma 2.17) Let $R$ be a ring and $S$ be a multiplicatively closed subset of $R$ consisting of central regular elements. Then $P(S^{-1}R)=\lbrace u^{-1}a\mid u\in S,\, a\in P(R) \rbrace$.
\end{lemma}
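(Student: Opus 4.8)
The plan is to prove the two inclusions $\{u^{-1}a \mid u\in S,\ a\in P(R)\}\subseteq P(S^{-1}R)$ and $P(S^{-1}R)\subseteq \{u^{-1}a \mid u\in S,\ a\in P(R)\}$ separately, using throughout the characterization $P(R)=\{a\in R\mid RaR\text{ is nilpotent}\}$ recalled in the introduction. Write $T=S^{-1}R$, let $\iota\colon R\to T$ be the canonical map $r\mapsto r/1$, and observe first that since every element of $S$ is regular, $\iota$ is injective: if $sr=0$ with $s\in S$ then $r=0$. The two facts I would lean on are that each $u\in S$ maps to a central unit of $T$, and that because $S$ is central, the two-sided ideal $TaT$ generated by (the image of) an element $a\in R$ consists exactly of the elements $d^{-1}c$ with $d\in S$ and $c\in RaR$; this is the bookkeeping step and is where I would be most careful.

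For the forward inclusion, take $u\in S$ and $a\in P(R)$, so $(RaR)^{n}=0$ for some $n$. Since $u^{-1}$ is a central unit, the ideal $T(u^{-1}a)T$ coincides with $TaT$, and by the description above every element of $TaT$ has the form $d^{-1}c$ with $c\in RaR$. A product of $n$ such elements equals $(d_1\cdots d_n)^{-1}(c_1\cdots c_n)$ by centrality of $S$, and $c_1\cdots c_n\in(RaR)^{n}=0$; hence $(TaT)^{n}=0$, so $a\in P(T)$ and therefore $u^{-1}a\in P(T)$ because $P(T)$ is an ideal.

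For the reverse inclusion, take $x\in P(T)$ and write $x=u^{-1}a$ with $u\in S$ and $a\in R$ (possible since every element of $T$ is of this form). As $u^{-1}$ is a central unit, $TxT=TaT$, so $(TaT)^{m}=0$ for some $m$. Now $\iota$ maps $RaR$ into $TaT$, hence $\iota((RaR)^{m})\subseteq(TaT)^{m}=0$; injectivity of $\iota$ forces $(RaR)^{m}=0$, i.e. $a\in P(R)$. Thus $x=u^{-1}a$ lies in the claimed set, completing the proof.

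I expect the only genuine obstacle to be the explicit description of $TaT$ and the accompanying common-denominator manipulation: once centrality of $S$ is used to slide all the inverse denominators to the left and collect them into a single element of $S$, everything reduces to nilpotency computations in $R$ transported along the injective map $\iota$. The regularity hypothesis enters exactly once, to guarantee injectivity of $\iota$ in the reverse inclusion. An alternative route would be to invoke the order-preserving correspondence between primes of $T$ and primes of $R$ disjoint from $S$ and intersect, but the nilpotent-ideal argument above is more self-contained and stays within the tools already introduced in the paper.
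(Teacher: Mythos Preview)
The paper does not supply its own proof of this lemma: it is quoted verbatim from \cite{kose1} (Lemma~2.17) and stated without argument, so there is nothing in the paper to compare your proposal against.

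That said, your argument is sound. The description $TaT=\{d^{-1}c\mid d\in S,\ c\in RaR\}$ is the only point needing care, and it follows exactly as you indicate by clearing denominators using centrality of $S$; once that is in hand, both inclusions reduce cleanly to nilpotency statements in $R$ via the injective map $\iota$, with regularity of $S$ used precisely where you say. Your alternative route through the bijection between primes of $S^{-1}R$ and primes of $R$ disjoint from $S$ is in fact the more commonly cited proof of this fact and is arguably more robust, since it does not depend on the specific characterisation $P(R)=\{a\in R\mid RaR\text{ is nilpotent}\}$; but given that the paper asserts this characterisation in the introduction, your choice to use it is entirely consistent with the surrounding text. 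Either version would serve as a perfectly acceptable replacement for the bare citation.
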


\begin{theorem}\label{rof}
Let $R$ be a ring and $S$ be a multiplicatively closed subset of $R$ consisting of central regular elements. Then $R$ is $P$-symmetric if and only if $S^{-1}R$ is $P$-symmetric.
\end{theorem}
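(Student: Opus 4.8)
The plan is to prove both implications using Lemma~\ref{l1}, which pins down $P(S^{-1}R)$ exactly, together with two structural features of localizing at a central regular set $S$. First, since the elements of $S$ are central, a product of fractions collapses as $(au^{-1})(bv^{-1})(cw^{-1}) = (abc)(uvw)^{-1}$ and the denominators may be permuted freely. Second, since the elements of $S$ are regular, the canonical map $R \to S^{-1}R$, $r \mapsto r/1$, is injective, and any relation $sx = 0$ with $s \in S$ forces $x = 0$. These two facts are what carry the proof in each direction.

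For the easy direction I would assume $R$ is $P$-symmetric and take $\alpha,\beta,\gamma \in S^{-1}R$ with $\alpha\beta\gamma = 0$; write $\alpha = au^{-1}$, $\beta = bv^{-1}$, $\gamma = cw^{-1}$ with $a,b,c \in R$ and $u,v,w \in S$. Using centrality, $\alpha\beta\gamma = (abc)(uvw)^{-1} = 0$, so there is $t \in S$ with $t\,abc = 0$, and regularity of $t$ gives $abc = 0$ in $R$. By hypothesis $bac \in P(R)$, and then $\beta\alpha\gamma = (bac)(uvw)^{-1}$ lies in $P(S^{-1}R)$ by Lemma~\ref{l1}. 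Hence $S^{-1}R$ is $P$-symmetric.

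For the converse I would assume $S^{-1}R$ is $P$-symmetric and let $a,b,c \in R$ with $abc = 0$. Viewing this equation in $S^{-1}R$ through the injective map $r \mapsto r/1$, we have $(a/1)(b/1)(c/1)=0$, so $P$-symmetry of $S^{-1}R$ yields $bac/1 \in P(S^{-1}R)$. By Lemma~\ref{l1} we may write $bac/1 = u^{-1}d$ for some $u \in S$ and $d \in P(R)$; clearing the denominator (again using that $S$ is central and regular) gives $u\,bac = d \in P(R)$.

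The main obstacle is this final step: passing from $u\,bac \in P(R)$ back to $bac \in P(R)$. Here I would invoke the characterization $P(R) = \lbrace x \in R : RxR \text{ is nilpotent}\rbrace$ recalled in the introduction. Since $u$ is central, $R(u\,bac)R = u\,R(bac)R$, and as $u\,bac \in P(R)$ this ideal is nilpotent, say $\bigl(u\,R(bac)R\bigr)^{n} = 0$. Centrality of $u$ lets me pull out $u^{n}$, giving $u^{n}\bigl(R(bac)R\bigr)^{n} = 0$; since $u$ is regular, so is $u^{n}$, whence $\bigl(R(bac)R\bigr)^{n} = 0$. Thus $R(bac)R$ is nilpotent, i.e.\ $bac \in P(R)$, so $R$ is $P$-symmetric. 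This completes the plan.
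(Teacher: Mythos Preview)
Your forward direction coincides with the paper's: they too write the fractions with central denominators, collapse the product to $(mnp)^{-1}abc=0$, deduce $abc=0$ (you make the regularity step explicit, which the paper suppresses), apply $P$-symmetry of $R$, and finish with Lemma~\ref{l1}. For the converse the paper says only ``Converse is trivial,'' so your argument already supplies more than the authors do, and your reduction to the implication ``$u\cdot bac\in P(R)\Rightarrow bac\in P(R)$ for central regular $u$'' is exactly the point that needs checking.

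There is, however, a genuine gap in how you close that implication. You invoke the description $P(R)=\{a\in R: RaR\text{ is nilpotent}\}$ from the introduction, but only the inclusion $\supseteq$ holds in general; the forward inclusion can fail. (For instance, in the monomial algebra $F\langle a,x_1,x_2,\dots\rangle$ modulo the relations $a^{2}=0$, $x_ix_j=0$ for all $i,j$, and $x_iax_j=0$ for $i\ge j$, the element $a$ is strongly nilpotent---any sequence $p_{n+1}\in p_nRp_n$ is trapped among the finitely many indices already appearing in $p_1$ and must terminate---yet $a x_1 a x_2 a\cdots a x_{n-1}a\neq 0$ shows $(RaR)^{n}\neq 0$ for every $n$.) So you cannot infer that $R(u\cdot bac)R$ is nilpotent from $u\cdot bac\in P(R)$. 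The repair is immediate using the strongly-nilpotent description instead: given any sequence $p_0=bac$, $p_{i+1}=p_ir_ip_i$, set $q_i=u^{2^{i}}p_i$; then $q_0=u\cdot bac\in P(R)$ and, since $u$ is central, $q_{i+1}=q_ir_iq_i\in q_iRq_i$. Strong nilpotence of $u\cdot bac$ forces $q_k=0$ for some $k$, and regularity of $u^{2^{k}}$ then gives $p_k=0$. Hence $bac\in P(R)$, and your converse goes through.
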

\begin{proof}
Let $R$ be a $P$-symmetric ring and $\alpha,\, \beta,\, \gamma\in S^{-1}R$ with $\alpha\beta\gamma=0$. Let $\alpha=m^{-1}a,\, \beta=n^{-1}b,\, \gamma=p^{-1}c$ where $m,\, n,\, p\in S,\, a,\, b,\, c\in R$. 
Since $S\subseteq Z(R),\, \alpha\beta\gamma=m^{-1}an^{-1}bp^{-1}c=(mnp)^{-1}abc=0$, so that $abc=0$. As $R$ is $P$-symmetric, $bac\in P(R)$. Therefore by Lemma \ref{l1}, $\beta\alpha\gamma \in P(S^{-1}R)$ which implies that $S^{-1}R$ is $P$-symmetric.\\
Converse is trivial.
\end{proof}

 $R$ is \textit{Armendariz} (\cite{sima}) if for any $f(x)=\displaystyle \sum_{i=0}^{i=m} a_{i}x^{i},\, g(x)=\displaystyle \sum_{j=0}^{j=n} b_{j}x^{j}\in R[x],\, f(x)g(x)=0$ implies  $a_{i}b_{j}=0$ for every $i,\, j$.\\

\begin{theorem}
Consider the following statements for any ring $R$:
\begin{enumerate}
\item  $R$ is $P$-symmetric.
\item $R[x]$ is $P$-symmetric.
\item The ring of Laurent polynomials $R[x;x^{-1}]$ is $P$-symmetric.
\end{enumerate}
Then $(2)\Longrightarrow (3) \Longrightarrow (1)$. Further, $(1)\Longrightarrow (2)$ if $R$ is an Armendariz ring.
\end{theorem}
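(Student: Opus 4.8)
The plan is to handle the three implications separately, leaning on the central-localization machinery of Theorem~\ref{rof} and Lemma~\ref{l1} together with the standard identity $P(R[x]) = P(R)[x]$ for the prime radical of a polynomial ring. For $(2)\Rightarrow(3)$ I would first observe that the Laurent polynomial ring is itself a central localization of $R[x]$: setting $S=\{1,x,x^2,\ldots\}$, the indeterminate $x$ is a central regular element of $R[x]$, so $S$ is a multiplicatively closed set of central regular elements and $R[x;x^{-1}]=S^{-1}R[x]$. Assuming $R[x]$ is $P$-symmetric, Theorem~\ref{rof} applied to this $S$ immediately yields that $S^{-1}R[x]=R[x;x^{-1}]$ is $P$-symmetric. (This in fact gives the equivalence $(2)\Leftrightarrow(3)$, but only the stated direction is required.)

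For $(3)\Rightarrow(1)$ I would regard $R$ as the subring of constants in $R[x;x^{-1}]$. Given $a,b,c\in R$ with $abc=0$, the same equation holds in $R[x;x^{-1}]$, so $P$-symmetry there gives $bac\in P(R[x;x^{-1}])$. The key computation is to identify this radical explicitly: combining $P(R[x])=P(R)[x]$ with Lemma~\ref{l1} applied to $S=\{x^n\}$ produces $P(R[x;x^{-1}])=P(R)[x;x^{-1}]$, namely the Laurent polynomials with all coefficients in $P(R)$. Intersecting with the constants then gives $bac\in P(R)[x;x^{-1}]\cap R=P(R)$, so $R$ is $P$-symmetric.

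The substantive direction is $(1)\Rightarrow(2)$ under the Armendariz hypothesis. Let $f=\sum_i a_ix^i$, $g=\sum_j b_jx^j$, $h=\sum_k c_kx^k$ satisfy $fgh=0$; the goal is $gfh\in P(R[x])=P(R)[x]$, i.e. that every coefficient of $gfh$ lies in $P(R)$. The crucial step is to upgrade the single identity $fgh=0$ to the coefficientwise relations $a_ib_jc_k=0$ for all $i,j,k$, and I would carry this out in two passes. Applying the Armendariz property to $f\cdot(gh)=0$ gives $a_i(gh)_\ell=0$ for all $i,\ell$, so that the polynomial $a_igh$ vanishes identically; applying Armendariz again to $(a_ig)\cdot h=0$ then yields $(a_ib_j)c_k=0$ for all $j,k$, and hence $a_ib_jc_k=0$ for every triple. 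Once this holds, the $P$-symmetry of $R$ gives $b_ja_ic_k\in P(R)$ for all $i,j,k$. Since the coefficient of $x^n$ in $gfh$ is $\sum_{i+j+k=n}b_ja_ic_k$ and $P(R)$ is an ideal (closed under sums), each coefficient of $gfh$ lies in $P(R)$, whence $gfh\in P(R)[x]=P(R[x])$.

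The main obstacle is this $(1)\Rightarrow(2)$ step, and within it the passage from the polynomial equation $fgh=0$ to the elementwise products $a_ib_jc_k=0$: this is precisely where the Armendariz hypothesis is indispensable, since $P$-symmetry of $R$ is only available at the level of ring elements, not polynomials. The remaining ingredient I am relying on throughout is the identity $P(R[x])=P(R)[x]$; should the paper not already record it, I would insert a brief justification of both inclusions using the characterization of $P(R)$ as the set of elements $a$ with $RaR$ nilpotent, which is stable under adjoining a central indeterminate.
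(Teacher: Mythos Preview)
Your proof is correct and follows essentially the same route as the paper: the same localization $S=\{x^n\}$ and Theorem~\ref{rof} for $(2)\Rightarrow(3)$, the subring-of-constants observation for $(3)\Rightarrow(1)$, and the Armendariz-to-coefficientwise step combined with $P(R[x])=P(R)[x]$ for $(1)\Rightarrow(2)$. The only cosmetic differences are that the paper dismisses $(3)\Rightarrow(1)$ as ``trivial'' (implicitly via the strongly-nilpotent characterization: any $R$-sequence for $bac$ is also an $R[x;x^{-1}]$-sequence) rather than computing $P(R[x;x^{-1}])$ explicitly, and that for the coefficientwise vanishing $a_ib_jc_k=0$ the paper cites Anderson--Camillo (\cite{dd}, Proposition~1) directly instead of writing out your two-pass Armendariz argument, which is in fact precisely the proof of that proposition in the three-factor case.
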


\begin{proof}
$(2)\Longrightarrow (3)$.
Assume $R[x]$ is $P$-symmetric and let $S=\lbrace 1,x,x^{2},...\rbrace$. Then $S$ is a multiplicatively closed subset of $R[x]$ consisting of central regular elements. Therefore by Theorem \ref{rof}, $S^{-1}R[x]$ is $P$-symmetric. 
Since $R[x;x^{-1}]\simeq S^{-1}R[x]$, the result follows.\\
$(3)\Longrightarrow (1)$ is trivial.

Let $R$ be an Armendariz ring and $f(x)=\displaystyle \sum_{i=0}^{i=m} a_{i}x^{i},\, g(x)=\displaystyle \sum_{j=0}^{j=n} b_{j}x^{j},\, h(x)=\displaystyle \sum_{k=0}^{k=l} c_{k}x^{k}\in R[x]$ with $f(x)g(x)h(x)=0$. 
Since $R$ is Armendariz, by (\cite{dd}, Proposition 1), $a_{i}b_{j}c_{k}=0$ for all $i,\, j,\, k$. As $R$ is $P$-symmetric, $b_{j}a_{i}c_{k}\in P(R)$ for all $i,\, j,\, k$, which implies that $g(x)f(x)h(x)\in P(R[x])$ as $P(R[x])=P(R)[x]$.

\end{proof}

\begin{theorem}\label{t2}
The following conditions are equivalent for any ring $R$:
\begin{enumerate}
\item  $R$  is $P$-symmetric.

\item $T_n(R)$, the ring of all $n\times n$ upper triangular matrices over $R$ is $P$-symmetric for any $n\geq 1$.

\item  $S_n(R)=\left \lbrace
\left(\begin{array}{lccccr}
a & a_{12} & \dots & a_{1n}\\
0 & a & \dots & a_{2n}\\
\vdots & \vdots &\ddots & \vdots\\
0 & 0 & \dots & a \\
\end{array}
\right ): a, a_{ij}\in R,  i<j\leq n \right \rbrace$ is $P$-symmetric for any $n\geq 1$.

\item $V_n(R)=\left \lbrace
\left(\begin{array}{lccccr}
a_{0} & a_{1} & a_{2} \dots &  a_{n-1}\\
0 & a_{0} &  a_{1} \dots & a_{n-2}  \\
\vdots & \vdots & \vdots\ddots & \vdots \\
0 & 0 & 0 \dots &a_{0} \\
\end{array}
\right ): a_i\in R,  i=0,1,2,..., n-1 \right \rbrace$  is $P$-symmetric for any $n\geq 1$.
\end{enumerate}
\end{theorem}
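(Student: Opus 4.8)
The plan is to reduce everything to the behaviour of the diagonal entries, exploiting the fact that for upper triangular matrices the $(i,i)$ entry of a product is the product of the $(i,i)$ entries of the factors. Before doing so, I would first pin down the prime radical of each matrix ring. Let $N \subseteq T_n(R)$ denote the ideal of strictly upper triangular matrices; then $N^n = 0$, so $N \subseteq P(T_n(R))$, and the diagonal projection $T_n(R) \to R^n$, $A \mapsto (A_{11}, \dots, A_{nn})$, is a surjective ring homomorphism with kernel $N$. Since $N \subseteq P(T_n(R))$ and $P(S/I) = P(S)/I$ whenever $I \subseteq P(S)$ (because the prime ideals of $S/I$ are exactly the images of the prime ideals of $S$ containing $I$, and all of these contain $P(S) \supseteq I$), I obtain $P(T_n(R))/N = P(T_n(R)/N) = P(R^n) = P(R)^n$. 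Hence $A \in P(T_n(R))$ iff $A_{ii} \in P(R)$ for every $i$. The identical argument, using that $S_n(R)/N \cong R$ and $V_n(R)/N \cong R$ (the isomorphisms sending a matrix to its common diagonal entry), shows that a matrix in $S_n(R)$ or $V_n(R)$ lies in the prime radical iff its diagonal entry $a$ lies in $P(R)$.

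With these descriptions in hand, the central implication $(1) \Rightarrow (2)$ is a short computation. Take $A, B, C \in T_n(R)$ with $ABC = 0$. For upper triangular matrices one has $(BAC)_{ii} = \sum_{i \leq j \leq k \leq i} B_{ij} A_{jk} C_{ki} = B_{ii} A_{ii} C_{ii}$, and likewise $(ABC)_{ii} = A_{ii} B_{ii} C_{ii}$; the constraint $i \le j \le k \le i$ forcing $i = j = k$ is exactly what kills all off-diagonal contributions. From $ABC = 0$ I read off $A_{ii} B_{ii} C_{ii} = 0$ in $R$ for each $i$, and since $R$ is $P$-symmetric this yields $(BAC)_{ii} = B_{ii} A_{ii} C_{ii} \in P(R)$ for every $i$. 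By the radical description above, $BAC \in P(T_n(R))$, so $T_n(R)$ is $P$-symmetric.

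For $(1) \Rightarrow (3)$ and $(1) \Rightarrow (4)$ the very same computation applies, now with constant diagonals: writing $A = aI + (\text{strictly upper})$, and similarly for $B, C$, one gets $(ABC)_{ii} = abc$ and $(BAC)_{ii} = bac$, so $ABC = 0$ gives $abc = 0$, hence $bac \in P(R)$, and therefore $BAC$ has its diagonal entry in $P(R)$, i.e. $BAC \in P(S_n(R))$ (resp. $P(V_n(R))$). Finally, the reverse implications $(2) \Rightarrow (1)$, $(3) \Rightarrow (1)$, $(4) \Rightarrow (1)$ are immediate on taking $n = 1$, since $T_1(R) = S_1(R) = V_1(R) = R$; this routes all equivalences through the case $n=1$ and avoids the fact that $P$-symmetry need not pass to subrings such as $S_n(R) \subseteq T_n(R)$. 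I expect the only point requiring genuine care to be the prime radical computation of the first paragraph — in particular justifying $P(S/I) = P(S)/I$ for $I \subseteq P(S)$ and verifying nilpotence of the strictly upper triangular ideal; once that is in place, the transfer of $P$-symmetry is essentially forced by the multiplicative behaviour of diagonals and presents no further obstacle.
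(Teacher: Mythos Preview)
Your proposal is correct and follows essentially the same route as the paper: reduce to diagonal entries, use that $(ABC)_{ii}=A_{ii}B_{ii}C_{ii}$ to pull back $P$-symmetry from $R$, and invoke the explicit description of the prime radical of each matrix ring. The only difference is cosmetic: the paper simply asserts the formulas for $P(T_n(R))$, $P(S_n(R))$, $P(V_n(R))$ as known, whereas you supply a justification via the nilpotent strictly upper triangular ideal and the identity $P(S/I)=P(S)/I$ for $I\subseteq P(S)$; and the paper dismisses the reverse implications as ``trivial'' while you make explicit that one takes $n=1$.
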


\begin{proof}
That $(2)\Longrightarrow (1)$,\, $(3)\Longrightarrow (1)$,\, $(4)\Longrightarrow (1)$ follows trivially.

We know that for any $n\geq 1$,\, 

$P(T_n(R))=\left \lbrace
\left(\begin{array}{lccccr}
a_{11} & a_{12} & \dots & a_{1n}\\
0 & a_{22} & \dots & a_{2n}\\
\vdots & \vdots &\ddots & \vdots\\
0 & 0 & \dots & a_{nn} \\
\end{array}
\right ): a_{ii}\in P(R),\,a_{ij} (i\neq j)\in R \right \rbrace$,

 $P(S_n(R))=\left \lbrace
\left(\begin{array}{lccccr}
a & a_{12} & \dots & a_{1n}\\
0 & a & \dots & a_{2n}\\
\vdots & \vdots &\ddots & \vdots\\
0 & 0 & \dots & a \\
\end{array}
\right ): a\in P(R), a_{ij}\in R,  i<j\leq n \right \rbrace$,

$P(V_n(R))=\left \lbrace
\left(\begin{array}{lccccr}
a_{0} & a_{1} & a_{2} \dots &  a_{n-1}\\
0 & a_{0} &  a_{1} \dots & a_{n-2}  \\
\vdots & \vdots & \vdots\ddots & \vdots \\
0 & 0 & 0 \dots &a_{0} \\
\end{array}
\right ): a_{0}\in P(R), a_i\in R,  i=1,2,..., n-1\right \rbrace$.

$(1)\Longrightarrow (2)$.

Let $A=\left(\begin{array}{lccccr}
a_{11} & a_{12} & \dots & a_{1n}\\
0 & a_{22} & \dots & a_{2n}\\
\vdots & \vdots &\ddots & \vdots\\
0 & 0 & \dots & a_{nn} \\
\end{array}
\right ),\, B=\left(\begin{array}{lccccr}
b_{11} & b_{12} & \dots & b_{1n}\\
0 & b_{22} & \dots & b_{2n}\\
\vdots & \vdots &\ddots & \vdots\\
0 & 0 & \dots & b_{nn} \\
\end{array}
\right ),\\
C=\left(\begin{array}{lccccr}
c_{11} & c_{12} & \dots & c_{1n}\\
0 & c_{22} & \dots & c_{2n}\\
\vdots & \vdots &\ddots & \vdots\\
0 & 0 & \dots & c_{nn} \\
\end{array}
\right )\in T_{n}(R)$ with $ABC=0$. Then for all $i,\, 1\leq i\leq n,\,a_{ii}b_{ii}c_{ii}=0$ and hence by hypothesis, $b_{ii}a_{ii}c_{ii}\in P(R)$  which implies that $BAC\in P(T_n(R))$.

That $(1)\Longrightarrow (3),\, (1)\Longrightarrow (4)$ can be proved in a similar way.
\end{proof}

If $R$ is $P$-symmetric, then $M_n(R)$, the ring of $n\times n$ matrices over $R$, need not be $P$-symmetric as shown by the following example:

\begin{example}
Let $R=M_2(\mathbb{R})$ and $A=\left(\begin{array}{lccccr}
	0 & 0 \\
	1 & 0 \\
	\end{array}
	\right )$, $B=\left(\begin{array}{lccccr}
	0 & 1 \\
	0 & 0 \\
	\end{array}
	\right )$, $C=\left(\begin{array}{lccccr}
	1 & 1\\
	0 & 0 \\
	\end{array}
	\right )\in R$. Then $ABC=0$ but $BAC=\left(\begin{array}{lccccr}
	1 & 1\\
	0 & 0 \\
	\end{array}
	\right )\notin P(R)$ as $BAC$ is not nilpotent.
\end{example}

For any non-empty sets $A$ and $B$, let $R[A, B]$ denote the set $\lbrace (a_1,\, a_2,..., a_{n}, b, b,...): a_{i}\in A,\, b\in B, n\geq 1,\, 1\leq i\leq n\rbrace$. If $A$ is a ring with identity and $B$ is a subring of $A$ with the same identity element of $A$, then $R[A, B]$ becomes a ring.

\begin{lemma}\label{prrab}
(\cite{kose1}, Lemma 3.7) Let $B$ be a subring of a ring $A$.  Then $P(R[A, B])=R[P(A), P(A)\bigcap P(B)]$.
\end{lemma}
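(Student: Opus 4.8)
The plan is to use the characterization of the prime radical as the set of strongly nilpotent elements, together with the two natural ring homomorphisms attached to $R[A,B]$. Write $S = R[A,B]$. For each position $\ell\geq 1$ let $\pi_\ell\colon S\to A$ be the $\ell$-th coordinate projection, and let $\tau\colon S\to B$ send an eventually constant sequence to its terminal value. Both are surjective ring homomorphisms: for $\pi_\ell$ use the element with an arbitrary $a\in A$ in slot $\ell$ and tail $0$, and for $\tau$ use the constant sequences $(b,b,\dots)$ with $b\in B$. A routine check shows that any ring homomorphism carries strongly nilpotent elements to strongly nilpotent elements (lift each descent step through a surjection), so each of these maps sends $P(S)$ into the prime radical of its target.

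For the inclusion $P(S)\subseteq R[P(A),\,P(A)\cap P(B)]$, take $s=(x_1,\dots,x_n,y,y,\dots)\in P(S)$. Applying $\pi_\ell$ gives $x_\ell=\pi_\ell(s)\in P(A)$ for $\ell\le n$ and $y=\pi_\ell(s)\in P(A)$ for $\ell>n$, while applying $\tau$ gives $y=\tau(s)\in P(B)$. Hence every $x_i$ lies in $P(A)$ and $y\in P(A)\cap P(B)$, which is exactly the assertion $s\in R[P(A),\,P(A)\cap P(B)]$.

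The reverse inclusion is where the real work lies, because strong nilpotency demands that a single step annihilate all (infinitely many) coordinates at once. Take $s=(x_1,\dots,x_n,y,y,\dots)$ with every $x_i\in P(A)$ and $y\in P(A)\cap P(B)$, and let $p_0=s$, $p_{i+1}=p_it_ip_i$ be an arbitrary descent sequence in $S$. Since multiplication is coordinatewise, $\pi_\ell(p_{i+1})=\pi_\ell(p_i)\pi_\ell(t_i)\pi_\ell(p_i)$ and $\tau(p_{i+1})=\tau(p_i)\tau(t_i)\tau(p_i)$, so $\{\pi_\ell(p_i)\}_i$ and $\{\tau(p_i)\}_i$ are themselves descent sequences in $A$ and $B$ beginning at elements of $P(A)$ and $P(B)$. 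The idea is to let the tail die first: as $\tau(s)=y\in P(B)$ is strongly nilpotent in $B$, there is $N_0$ with $\tau(p_{N_0})=0$, whence $p_{N_0}$ has terminal value $0$, i.e.\ it is supported on a finite initial segment of coordinates $1,\dots,M$, and this support cannot grow along the descent.

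It then remains to kill these finitely many coordinates. For each $\ell\le M$, the element $\pi_\ell(p_{N_0})$ lies in $P(A)$ by an easy induction (using that $P(A)$ is an ideal and $\pi_\ell(s)\in P(A)$), so the descent sequence $\{\pi_\ell(p_j)\}_{j\ge N_0}$ in $A$ terminates at some $N_\ell$; taking $N=\max\{N_0,N_1,\dots,N_M\}$ gives $p_N=0$. Thus $s$ is strongly nilpotent, so $s\in P(S)$, and the two inclusions together prove the lemma. The one delicate point is precisely this reduction from infinitely many to finitely many coordinates; it is the eventual-constancy built into $R[A,B]$ — combined with $y\in P(B)$ to kill the tail and $x_i,y\in P(A)$ to kill the remaining coordinates — that makes it succeed, and this is exactly why the tail entry must lie in $P(A)\cap P(B)$ rather than merely in $P(B)$.
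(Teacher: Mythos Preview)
Your argument is correct. Note, however, that the paper does not prove this lemma at all: it is quoted verbatim from \cite{kose1}, Lemma~3.7, and used as a black box. So there is no ``paper's own proof'' to compare against; you have supplied a self-contained proof where the authors chose to cite one.

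Your approach via the strongly-nilpotent characterization of $P(-)$, the surjections $\pi_\ell$ and $\tau$, and the ``kill the tail first, then the finite support'' reduction is clean and exactly the natural line. One minor simplification: you do not need the separate inductive remark that $\pi_\ell(p_{N_0})\in P(A)$; the full sequence $\{\pi_\ell(p_j)\}_{j\ge 0}$ is already a descent sequence in $A$ starting at $\pi_\ell(s)\in P(A)$, so it terminates at some $N_\ell$ directly, and you may simply take $N=\max\{N_0,N_1,\dots,N_M\}$.
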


\begin{theorem}
Let $B$ be a subring of a ring $A$ with the identity element same as that of $A$. The following statements are equivalent:
\begin{enumerate}
\item $A$ and $B$ are $P$-symmetric.
\item $R[A, B]$ is $P$-symmetric.
\end{enumerate}
\end{theorem}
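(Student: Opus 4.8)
The plan is to exploit the explicit description of the prime radical of $R[A,B]$ given by Lemma \ref{prrab}, namely $P(R[A,B]) = R[P(A), P(A)\cap P(B)]$, and to transfer the $P$-symmetric condition coordinatewise, remembering that $R[A,B]$ consists of sequences that are eventually constant with eventual value in $B$ and that addition and multiplication are componentwise.

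For the implication $(2)\Longrightarrow(1)$, I would test the $P$-symmetry of $A$ and of $B$ by suitable embeddings into $R[A,B]$. To see that $A$ is $P$-symmetric, given $a,b,c\in A$ with $abc=0$, I would use the sequences $\alpha=(a,0,0,\dots)$, $\beta=(b,0,0,\dots)$, $\gamma=(c,0,0,\dots)$, which lie in $R[A,B]$ since their eventual value $0$ belongs to $B$; then $\alpha\beta\gamma=0$, so $\beta\alpha\gamma=(bac,0,0,\dots)\in P(R[A,B])$, and reading off the first coordinate via Lemma \ref{prrab} yields $bac\in P(A)$. To see that $B$ is $P$-symmetric, given $a,b,c\in B$ with $abc=0$, I would instead use the constant sequences $\alpha=(a,a,\dots)$, $\beta=(b,b,\dots)$, $\gamma=(c,c,\dots)\in R[A,B]$; then $\beta\alpha\gamma=(bac,bac,\dots)\in R[P(A),P(A)\cap P(B)]$, and its eventual value gives $bac\in P(A)\cap P(B)\subseteq P(B)$.

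For the converse $(1)\Longrightarrow(2)$, I would take $\alpha,\beta,\gamma\in R[A,B]$ with $\alpha\beta\gamma=0$, write them as eventually constant sequences, and pad the finite initial segments to a common length $N$ so that the tails $a,b,c\in B$ of $\alpha,\beta,\gamma$ occupy every coordinate beyond $N$. Since multiplication is componentwise, $\alpha\beta\gamma=0$ forces $\alpha_k\beta_k\gamma_k=0$ in every coordinate $k$. For each of the finitely many head coordinates $k\leq N$ the entries lie in $A$, so $P$-symmetry of $A$ gives $\beta_k\alpha_k\gamma_k\in P(A)$; for the tail one has $abc=0$ with $a,b,c\in B\subseteq A$, so $P$-symmetry of $B$ gives $bac\in P(B)$ while $P$-symmetry of $A$ gives $bac\in P(A)$, whence $bac\in P(A)\cap P(B)$. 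Thus $\beta\alpha\gamma$ has head entries in $P(A)$ and eventual value in $P(A)\cap P(B)$, so by Lemma \ref{prrab} it lies in $P(R[A,B])$.

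The routine points are the verifications that the exhibited sequences genuinely belong to $R[A,B]$ (closure of $B$ under the relevant operations) and the padding bookkeeping. The one step deserving care, and the place where both hypotheses in $(1)$ are genuinely used, is handling the eventual value in the converse: landing $bac$ in the intersection $P(A)\cap P(B)$ — not merely in $P(B)$ — is exactly what Lemma \ref{prrab} demands of the tail, and this forces me to invoke $P$-symmetry of $A$ and of $B$ simultaneously on the same triple $a,b,c\in B$.
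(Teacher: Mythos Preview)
Your proposal is correct and follows essentially the same approach as the paper: both directions use the coordinatewise description of $P(R[A,B])$ from Lemma~\ref{prrab}, pad sequences to a common length for $(1)\Rightarrow(2)$, and embed $A$ and $B$ into $R[A,B]$ via suitable sequences for $(2)\Rightarrow(1)$. You are in fact more explicit than the paper on two points it leaves implicit: that the tail value $bac$ must land in $P(A)\cap P(B)$ (forcing both hypotheses in $(1)$ to be used), and that $P$-symmetry of $B$ is extracted via constant sequences rather than an unspecified ``similarly''.
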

\begin{proof}
$(1)\Longrightarrow (2)$.
Let  $f,\, g,\, h\in R[A, B]$ satisfy $fgh=0$.\\
Let $f=(a_1,\, a_2,..., a_{n_1},\, a, a,...),\, g=(b_1,\, b_2,..., b_{n_2},\, b, b,...),\\
h=(c_1,\, c_2,..., c_{n_3},\, c, c,...)$, where $a_{i},\, b_{j},\, c_{k}\in A,\, a, b, c\in B,\, n_1,n_2, n_3\geq 1,\, 1\leq i\leq n_1,\, 1\leq j\leq n_2, 1\leq k\leq n_3.$ Take $n=max\lbrace n_1, n_2, n_3\rbrace$. 
If $n_1$ is maximum, let $b_j=b$ for $n_2+1\leq j\leq n_1$, and $c_k=c$ for $ n_3+1\leq k\leq n_1$. Similar relations are assumed  when $n_2$ or $n_3$ is maximum. Then $abc=0$ and for $1\leq i\leq n$, $a_{i}b_{i}c_{i}=0$. 
Therefore by hypothesis and Proposition \ref{prrab}, we conclude that $gfh\in P(R[A, B])$.

$(2)\Longrightarrow (1)$.
Let $a,\, b,\, c\in A$ satisfy $abc=0$. Consider the element $f=(a,0,0,...),\, g=(b,0,0,...),\, h=(c,0,0,...)\in R[A, B]$ with $fgh=0$ in $R[A, B]$. Then by hypothesis and Proposition \ref{prrab}, $gfh\in P(R[A, B])$ which yields $bac\in P(A)$. Hence $A$ is $P$-symmetric. Similarly,  we can establish that $B$ is $P$-symmetric.

\end{proof}

\begin{theorem}
The following statements are equivalent for a ring $R$:
\begin{enumerate}
\item $R$ is $P$-symmetric.
\item The ring $S=\lbrace (x,y)\in R\times R\mid x-y\in P(R) \rbrace$ is $P$-symmetric.
\end{enumerate}
\end{theorem}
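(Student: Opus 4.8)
The crux is to identify the prime radical of $S$, and I claim the correct statement is $P(S) = P(R)\times P(R)$; once this is established, both implications are short. The plan is to obtain this description by mimicking the argument of Theorem \ref{subpr}, that is, by applying Lemma \ref{prsub} to two complementary ideals of $S$.

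First I would record that $I = \{0\}\times P(R)$ and $J = P(R)\times\{0\}$ are ideals of $S$: for $(x,y)\in S$ and $p\in P(R)$ one has $(x,y)(0,p)=(0,yp)$ and $(0,p)(x,y)=(0,py)$, which lie in $I$ because $P(R)$ is an ideal of $R$ (and $(0,p),(p,0)\in S$ since $P(R)$ is an additive subgroup). Clearly $I\cap J = 0$. Next I would observe that the two projections $\pi_1,\pi_2\colon S\to R$ are surjective, since $(x,x)\in S$ for every $x\in R$, with $\ker\pi_1 = I$ and $\ker\pi_2 = J$, so that $S/I\cong R\cong S/J$.

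Then I would compute the two intersections appearing in Lemma \ref{prsub}. The prime ideals of $S$ containing $I$ correspond to the prime ideals of $S/I\cong R$, so the intersection of all such prime ideals is the preimage under $\pi_1$ of $P(R)$, namely $\pi_1^{-1}(P(R)) = \{(x,y)\in S : x\in P(R)\}$. But if $(x,y)\in S$ with $x\in P(R)$, then $y = x-(x-y)\in P(R)$, so this set is exactly $P(R)\times P(R)$. The analogous computation with $\pi_2$ and $J$ again yields $P(R)\times P(R)$. Lemma \ref{prsub} then gives $P(S) = (P(R)\times P(R))\cap(P(R)\times P(R)) = P(R)\times P(R)$.

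With this description in hand the theorem follows easily. For $(1)\Longrightarrow(2)$, take $\alpha=(a_1,a_2),\,\beta=(b_1,b_2),\,\gamma=(c_1,c_2)\in S$ with $\alpha\beta\gamma=0$; reading componentwise gives $a_1b_1c_1 = 0 = a_2b_2c_2$, so $P$-symmetry of $R$ yields $b_ia_ic_i\in P(R)$ for $i=1,2$, whence $\beta\alpha\gamma\in P(R)\times P(R) = P(S)$. For $(2)\Longrightarrow(1)$, given $abc=0$ in $R$, I would feed the diagonal elements $(a,a),(b,b),(c,c)\in S$ into $P$-symmetry of $S$ to conclude $(bac,bac)\in P(S)$, i.e.\ $bac\in P(R)$. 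The only substantive work is the prime-radical computation, and the one point there requiring care is the verification that $\pi_1^{-1}(P(R)) = P(R)\times P(R)$, which relies on the defining relation $x-y\in P(R)$.
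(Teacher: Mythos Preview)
Your proof is correct. The central structural fact you establish, $P(S)=P(R)\times P(R)$, yields a clean and uniform treatment of both implications, and your derivation of it via Lemma~\ref{prsub} with the ideals $I=\{0\}\times P(R)$ and $J=P(R)\times\{0\}$ is sound (including the observation that $\pi_1^{-1}(P(R))=P(R)\times P(R)$ because the defining condition $x-y\in P(R)$ forces $y\in P(R)$ once $x\in P(R)$).

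The paper proceeds somewhat differently. For $(1)\Rightarrow(2)$ it simply observes that $S$ is a subdirect product of two copies of $R$ via the projections and invokes Theorem~\ref{subpr}; this is essentially the same Lemma~\ref{prsub} machinery you use, but the paper does not extract the explicit description of $P(S)$. For $(2)\Rightarrow(1)$ the paper takes a genuinely different route: rather than computing $P(S)$, it argues directly with strongly nilpotent sequences---given any sequence $p_0=bac$, $p_{i+1}\in p_iRp_i$ in $R$, it lifts to the diagonal sequence $q_i=(p_i,p_i)$ in $S$, which must eventually vanish since $(bac,bac)\in P(S)$, hence some $p_m=0$. Your approach is more economical in that one identity serves both directions; the paper's $(2)\Rightarrow(1)$ is more self-contained (no appeal to Lemma~\ref{prsub}) but at the cost of symmetry between the two halves of the proof.
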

\begin{proof}
$(1)\Longrightarrow (2)$.
Consider the homomorphisms $f: S\rightarrow R$ by $(x, y)\rightarrow x$ and $g: S\rightarrow R$ by $(x, y)\rightarrow y$. Then $f$ and $g$ are epimorphisms and $Ker(f)\cap  Ker(g)=0$. 
By hypothesis, $S/Ker(f)\cong R$ and $S/Ker(g)\cong R$ are $P$-symmetric rings. Therefore $S$ becomes a subdirect product of $S/Ker(f)$ and $S/Ker(g)$. Hence by Theorem \ref{subpr}, $S$ is $P$-symmetric.

$(2)\Longrightarrow (1)$.
Let $a,\, b,\, c\in R$ with $abc=0$. Then $(a, a)(b, b)(c, c)=(0, 0)$. By hypothesis, $(b, b)(a, a)(c, c)\in P(S)$. Consider any sequence  $\lbrace p_{n}\rbrace $  in $R$ with $ p_0=bac$ and $p_{i+1}\in p_{i}Rp_{i}$ for all $i\geq 0$. 
Let $ q_{0}=(b, b)(a, a)(c, c),\, q_{1}= (p_1, p_1),\, q_{2}= (p_2, p_2),..., q_{n}= (p_n, p_n)$ with $(p_{i+1}, p_{i+1})= (p_{i}, p_{i})(x,x)(p_{i}, p_{i})$ for all $i\geq 0$, for some $x\in R$. By hypothesis, there exists positive integer $m$ such that $q_{m}=(0, 0)$  which implies that $p_{m}=0$. This shows that $bac\in P(R)$. Hence $R$ is $P$-symmetric.
\end{proof}


\begin{thebibliography}{10}



\bibitem{dd}
D. D. Anderson and V. Camillo, {\em Armendariz rings and Gaussian rings}, Comm. in Algebra \textbf{26(7)} (1998) 2265-2272.
\bibitem{chen5}
H. Chen, {\em Rings Related to Stable Range Conditions}, Series in Algebra 11, World Scientific Publishing Co. Pte. Ltd., Hackensack, NJ, 2011.
\bibitem{gang}
 Z. Liang and Y.  Gang, {\em On weakly reversible rings}, Acta Math. Univ. Comenianae \textbf{76(2)} (2007) 189-192.
\bibitem{kafkas}
 G. Kafkas, B. Ungor, S. Halicioglu and A. Harmanci, {\em Generalized symmetric rings}, Algebra Discrete Math. \textbf{12(2)} (2011) 72-84.
\bibitem{kose1} 
 H. Kose and B. Ungor, {\em Semicommutativity of the rings relative to prime radical}, Comment. Math. Univ. Carolin. \textbf{56(4)} (2015) 401-415. 
\bibitem{coy}
 N. H. McCoy, {\em The Theory of Rings}, Chelsea Publishing Company, New York, 1973.
\bibitem{lam}
J. Lambok, {\em On the representation of modules by sheaves of factor modules}, Canad. Math. Bull. \textbf{14} (1971) 359-368.
\bibitem{ouyang}
 L. Ouyang and H. Chen, {\em On weak symmetric rings}, Comm. Algebra \textbf{38(2)} (2010) 697-713.
\bibitem{sima} 
 M. B. Rege and S. Chhawchharia, {\em Armendariz rings}, Proc. Japan Acad. \textbf{73(A)} (1997) 14-17.

\bibitem{wei1}
J. Wei, {\em Generalized weakly symmetric rings}, J. Pure  Appl. Algebra \textbf{218} (2014) 1594-1603.


\end{thebibliography}
\end{document}